\documentclass[11pt]{article}
\usepackage[margin=0.72in]{geometry}
\usepackage{amsmath,amssymb,amsthm,microtype}
\usepackage{tikz}
\usepackage[hidelinks]{hyperref}
\newtheorem{theorem}{Theorem}
\newcommand{\R}{\mathbb R}
\title{Focal Sets of the John Ellipsoid of a Simplex from Its Vertices}
\author{Anatoly Eydelzon\\Department of Mathematical Sciences\\The University of Texas at Dallas\\\texttt{anatoly@utdallas.edu}}
\date{}
\begin{document}
\maketitle
\begin{abstract}
Starting with the vertices of a simplex in $\R^n$, we build a cubic vector field. If the semiaxes of the simplex's John ellipsoid are distinct, the Jacobian of this field has repeated eigenvalues exactly on the ellipsoid's focal sets. This works in every dimension $n\geq2$. We give examples in $\R^4$, $\R^3$, and $\R^2$: three focal quadrics, an ellipse and a hyperbola, and finally two points. In the plane, these two points are also the zeros of the derivative in the Siebeck--Marden theorem. Complex numbers explain why the planar case has a shorter formula.
\end{abstract}

\section*{The triangle and a question}
If the vertices of a triangle are $z_0,z_1,z_2\in\mathbb C$, the Siebeck--Marden theorem says that the zeros of
\[
\frac{d}{dz}\bigl[(z-z_0)(z-z_1)(z-z_2)\bigr]
\]
are the foci of the ellipse tangent to the sides at their midpoints. It is called the Steiner inellipse and is also the largest-area ellipse inside the triangle. For proofs of the Siebeck--Marden theorem, see \cite{Kalman,Bogosel}.

\begin{figure}[htbp]
\centering
\begin{tikzpicture}[scale=.58]
\draw[thin] (4,0)--(-2,{sqrt(3)})--(-2,{-sqrt(3)})--cycle;
\draw[green!45!black,very thick,samples=140,smooth,variable=\t,
  domain=0:360] plot ({2*cos(\t)},{sin(\t)});
\fill[orange!75!black] ({sqrt(3)},0) circle (2.5pt);
\fill[orange!75!black] ({-sqrt(3)},0) circle (2.5pt);
\end{tikzpicture}
\caption{A triangle, its Steiner inellipse, and its two foci. We return to these vertices after the higher-dimensional examples.}\label{fig:triangle}
\end{figure}

What happens to this picture for a simplex in $\R^3$, $\R^4$, or $\R^n$? Can we find the focal sets of its maximal-volume inscribed ellipsoid, called the \emph{John ellipsoid}, directly from the vertices? We cannot multiply vectors in $\R^3$ as we multiply complex numbers. We build a real vector field instead and look for repeated eigenvalues of its Jacobian. The method works in every dimension $n\geq2$ when the semiaxes are distinct. After the general calculation we look at $\R^4$, $\R^3$, and finally the triangle, where the complex polynomial reappears.

\section*{One matrix from the vertices}
Let $v_0,\ldots,v_n$ be the vertices of a nondegenerate simplex in $\R^n$, let $g=(v_0+\cdots+v_n)/(n+1)$, and put $u_i=v_i-g$. Form the positive-definite matrix
\begin{equation}\label{eq:B}
 B=\frac{1}{n(n+1)}\sum_{i=0}^{n}u_i u_i^T.
\end{equation}
Its John ellipsoid has the equation
\begin{equation}\label{eq:E}
 E=\bigl\{g+x:x^TB^{-1}x\leq 1\bigr\}.
\end{equation}
To see this, start with a regular simplex of circumradius $R$. Its inscribed ball has radius $R/n$ and is its John ellipsoid. Symmetry gives $\sum u_i u_i^T=(n+1)R^2 I/n$, so \eqref{eq:B} gives the ball's matrix $R^2I/n^2$. Now send this regular simplex onto our simplex by an invertible affine map $v\mapsto Av+b$. Its centered vertices become $Au_i$, so the matrix of our simplex in \eqref{eq:B} is $A(R^2I/n^2)A^T=(R^2/n^2)AA^T$. We call this matrix $B$. The inscribed ball becomes the ellipsoid \eqref{eq:E}. It must still have the greatest volume: every competing ellipsoid pulls back to one in the regular simplex, and all volumes change by the same factor $|\det A|$. Thus this affine image is both the ellipsoid in \eqref{eq:E} and the John ellipsoid; see also \cite{John,Ball}. The Euclidean inscribed ball of our simplex may be different. A corner-volume characterization of the same John ellipsoid appears in \cite{Eydelzon2026}. Our question here is different: how can we find its focal sets from the vertices?

The matrix $B$ is symmetric, so an orthogonal matrix $Q$ diagonalizes it:
\[
 Q^TBQ=D=\operatorname{diag}(d_1,\ldots,d_n),\qquad d_1>\cdots>d_n>0.
\]
We place the origin at $g$ and use coordinates along the columns of $Q$. In these coordinates the matrix of the ellipsoid is $D$. To keep the notation short, we write $x$ for the new coordinate vector and $B=D$ from this point on. An orthogonal change of coordinates does not affect whether a Jacobian has a repeated eigenvalue. Here $d_i$ is the $i$th eigenvalue of the original $B$, so the semiaxis lengths are $a_i=\sqrt{d_i}$. The strict inequalities are needed: equal semiaxes can make the Jacobian have repeated eigenvalues away from the foci. For example, if $D=\operatorname{diag}(9,4,4)$, then at every point $(t,0,0)$ the matrix $D-xx^T=\operatorname{diag}(9-t^2,4,4)$ has a repeated eigenvalue. It detects the entire first axis, while the ellipsoid $x_1^2/9+(x_2^2+x_3^2)/4=1$ has just two foci, $(\pm\sqrt5,0,0)$. Thus our exact correspondence requires distinct semiaxes.

\section*{A cubic and its Jacobian}
Consider the cubic vector field
\begin{equation}\label{eq:F}
 F(x)=Bx-\tfrac12\|x\|^2x.
\end{equation}
Its Jacobian is the symmetric matrix
\begin{equation}\label{eq:J}
 DF(x)=B-xx^T-\tfrac12\|x\|^2I.
\end{equation}
The last term shifts every eigenvalue by the same amount. Therefore $DF(x)$ has a repeated eigenvalue if and only if $D-xx^T$ does.

We should say what we mean by a \emph{focal quadric} in $\R^n$. The ordinary members of the confocal family are
\[
 \sum_{j=1}^n\frac{x_j^2}{d_j-t}=1,\qquad t\ne d_1,\ldots,d_n.
\]
At the singular parameter $t=d_k$, we call the set
\[
 x_k=0,\qquad \sum_{j\ne k}\frac{x_j^2}{d_j-d_k}=1
\]
the associated focal quadric, following \cite[Eq.~(15)]{HorvathProk}. One cannot simply substitute $t=d_k$ into the confocal equation. In particular, a limiting surface can flatten over a region of $x_k=0$; the displayed equation selects its focal boundary. Each nonempty focal quadric has dimension $n-2$ as a set in $\R^n$. For $n=3$ these sets are the focal curves; for $n=2$ they are the two foci. We use ``focal set'' for all these cases.

\begin{theorem}\label{thm:main}
For any $n\geq2$, let a simplex in $\R^n$ have John ellipsoid \eqref{eq:E} with distinct squared semiaxes $d_1>\cdots>d_n$. The points where $DF(x)$ has a repeated eigenvalue form the union of the following $n-1$ real sets:
\begin{equation}\label{eq:focal}
 x_k=0,\qquad \sum_{j\ne k}\frac{x_j^2}{d_j-d_k}=1,
 \qquad k=2,\ldots,n.
\end{equation}
These are exactly its real focal sets in principal coordinates centered at $g$.
\end{theorem}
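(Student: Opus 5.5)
By the remark after \eqref{eq:J}, it is enough to decide when the symmetric matrix $M(x)=D-xx^T$ has a repeated eigenvalue; here $D=\operatorname{diag}(d_1,\ldots,d_n)$ with $d_1>\cdots>d_n$. This is a rank-one perturbation of a diagonal matrix with distinct entries. The plan is to use the secular equation together with interlacing.

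First, the spectrum of $M(x)$. For $\lambda\neq d_j$ the matrix determinant lemma gives
\[
\det(M(x)-\lambda I)=\prod_{j}(d_j-\lambda)\Bigl(1-\sum_j\frac{x_j^2}{d_j-\lambda}\Bigr),
\]
and clearing denominators yields the polynomial identity
\[
\det(M-\lambda I)=\prod_j(d_j-\lambda)-\sum_j x_j^2\prod_{i\ne j}(d_i-\lambda).
\]

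Second, the case where all $x_j\ne0$. The right side evaluated at $\lambda=d_k$ equals $-x_k^2\prod_{i\ne k}(d_i-d_k)$, and its sign alternates in $k$. Hence there is one root strictly inside each interval $(d_{k+1},d_k)$, and one further root below $d_n$. This gives $n$ distinct eigenvalues, so nothing is repeated.

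Third, the case where some $x_k=0$. Then $e_k$ is an eigenvector of $M(x)$ with eigenvalue $d_k$. The other eigenvalues are those of $D'-x'x'^T$ on the complementary coordinates. A repeated eigenvalue occurs exactly when $d_k$ is also an eigenvalue of that block. Take $J=\{j: x_j\ne0\}$. The eigenvalues of the block are the $d_j$ with $j\notin J$, $j\ne k$, together with the roots of the secular function restricted to $J$; by step two these roots are simple and differ from every $d_j$ with $j\in J$.

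We must therefore check when $d_k$ is a root of
\[
1-\sum_{j\in J}\frac{x_j^2}{d_j-d_k}=0.
\]
No other $d_i$ equals $d_k$, so this is the only way to obtain a repeat. Since every index with $x_j=0$ contributes nothing, the condition is exactly $\sum_{j\ne k}x_j^2/(d_j-d_k)=1$ together with $x_k=0$. This is \eqref{eq:focal}.

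For $k=1$ every denominator $d_j-d_1$ is negative, so the set is empty. This explains why the union runs over $k=2,\ldots,n$. Conversely, every point of \eqref{eq:focal} makes $d_k$ a double eigenvalue, which proves the reverse inclusion. The identification with the focal quadrics follows directly from the definition recalled before the theorem, after translating by $g$ and using the orthogonal change to principal coordinates.

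The main obstacle will be the bookkeeping when several coordinates vanish at once. We have to be sure that the only coincidence that can happen is $d_k$ equal to a secular root; two secular roots cannot coincide, and two of the $d_j$ cannot coincide. Both facts come from the strict inequalities and from the interlacing argument of step two applied to the restricted index set $J$. A cleaner alternative is to note that the discriminant of the characteristic polynomial factors through $\prod_k x_k^2$ times the secular conditions, but the interlacing route avoids computing discriminants.
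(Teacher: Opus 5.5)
Your proof is correct, but it takes a different route from the paper.

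\textbf{The paper's route.} The paper never computes the spectrum of $D-xx^T$. A repeated eigenvalue $\lambda$ has an eigenspace of dimension at least two, so that eigenspace contains a nonzero eigenvector $y\perp x$. Then $(D-\lambda I)y=xx^Ty=0$ forces $\lambda=d_k$ and $y\parallel e_k$, and $y\perp x$ then gives $x_k=0$. Once the repeat is known to sit at some $d_k$ with $x_k=0$, multiplicity at least two is equivalent to $d_k$ being an eigenvalue of the complementary block. The determinant lemma then gives \eqref{eq:focal}. This one-line trick places every possible coincidence at once, so the paper needs neither interlacing nor any bookkeeping over the support $J=\{j:x_j\ne0\}$.

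\textbf{Your route.} The secular-equation and interlacing argument is longer, but it proves more. Off the coordinate hyperplanes, all eigenvalues are simple and strictly interlace the $d_j$. On \eqref{eq:focal}, the eigenvalue $d_k$ is exactly double.

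\textbf{One sentence to tighten.} Fix $k$ with $x_k=0$. The claim ``a repeated eigenvalue occurs exactly when $d_k$ is also an eigenvalue of that block'' is false if some other $x_l$ also vanishes, because the repeat may then be $d_l$. What your $J$-decomposition actually shows is this: the spectrum is $\{d_j:j\notin J\}$ together with $|J|$ simple secular roots, each different from every $d_j$ with $j\in J$. So a repeat occurs if and only if some $d_k$ with $k\notin J$ equals a secular root. That is the union over $k$ stated in the theorem, and your closing paragraph already says it correctly. The issue is only presentation, not a gap.
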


\begin{proof}
Write $M=D-xx^T$. If $M$ has a repeated eigenvalue $\lambda$, its eigenspace has dimension at least two. We can therefore choose a nonzero eigenvector $y$ perpendicular to $x$. Then
\[
 (D-\lambda I)y=xx^Ty=0.
\]
Since the $d_j$ are distinct, $\lambda=d_k$ for some $k$, and $y$ is a multiple of the $k$th coordinate vector $e_k$. In particular, $x_k=0$. Conversely, when $x_k=0$, the vector $e_k$ is an eigenvector of $M$ with eigenvalue $d_k$. To have a repeated eigenvalue, the remaining $(n-1)\times(n-1)$ block must also have eigenvalue $d_k$. We use $\det(S-ww^T)=\det(S)(1-w^TS^{-1}w)$ for an invertible matrix $S$. Here $S$ is diagonal with entries $d_j-d_k$ for $j\ne k$, and $w$ consists of the coordinates $x_j$ with $j\ne k$. Thus the determinant of the remaining block minus $d_k I$ is
\[
 \prod_{j\ne k}(d_j-d_k)
 \left(1-\sum_{j\ne k}\frac{x_j^2}{d_j-d_k}\right).
\]
The determinant is zero exactly on \eqref{eq:focal}. The case $k=1$ gives no real points, because all the denominators are negative. For the other values of $k$, these are precisely the focal equations defined above; see \cite{Tabachnikov,Dragovic} for the confocal construction.
\end{proof}

We get one equation for each $k=2,\ldots,n$, in every dimension. No complex numbers are needed. What do these sets look like? Each lies in the hyperplane $x_k=0$ and has dimension $n-2$. Its equation has $k-1$ positive square terms and $n-k$ negative ones. For $k=n$, it is an ellipsoid in the hyperplane $x_n=0$. For $n\geq3$, the other values of $k$ give hyperboloids. When $n=2$, only two points remain. When $n=3$, we get an ellipse and a hyperbola. Our next three examples show how this pattern works.

The focal quadrics do not determine the whole ellipsoid. If we replace $D$ by $D+tI$, the differences $d_j-d_k$ stay the same, so \eqref{eq:focal} does not change. The Jacobian also changes only by $tI$. The vertices, through \eqref{eq:B}, tell us which ellipsoid we started with. Even in the plane, knowing the two foci is not enough to know the ellipse.

The focal-quadric equations themselves are classical \cite{HorvathProk,Dragovic}, and rank-one changes of diagonal matrices have an established eigenvalue theory \cite{Golub}. Our point is that the repeated-eigenvalue locus of the explicit cubic Jacobian \eqref{eq:J} recovers all these focal quadrics from the simplex vertices and, in the plane, becomes the derivative in the Siebeck--Marden theorem.

\section*{A four-dimensional example}
We first take a $4$-simplex. Its five vertices are the columns of the matrix
\[
 U=\sqrt{10}\begin{pmatrix}
 1&1&1&1&-4\\
 1&1&1&-3&0\\
 1&1&-2&0&0\\
 1&-1&0&0&0
 \end{pmatrix}.
\]
Each row sums to zero, so the centroid is the origin. The rows are also orthogonal. By \eqref{eq:B}, with $n=4$,
\[
 B=\frac1{20}UU^T=\operatorname{diag}(10,6,3,1),
 \qquad E:\ \frac{x_1^2}{10}+\frac{x_2^2}{6}+\frac{x_3^2}{3}+x_4^2\leq1.
\]
The theorem gives three focal quadrics. We list them from the smallest eigenvalue upward:
\begin{align*}
 k=4:&\quad x_4=0, &\frac{x_1^2}{9}+\frac{x_2^2}{5}+\frac{x_3^2}{2}&=1,\\
 k=3:&\quad x_3=0, &\frac{x_1^2}{7}+\frac{x_2^2}{3}-\frac{x_4^2}{2}&=1,\\
 k=2:&\quad x_2=0, &\frac{x_1^2}{4}-\frac{x_3^2}{3}-\frac{x_4^2}{5}&=1.
\end{align*}
The first is an ellipsoid, the second a one-sheet hyperboloid, and the third a two-sheet hyperboloid. Each is a two-dimensional surface in its indicated three-dimensional hyperplane. So the five vertices give us three kinds of focal quadric in $\R^4$.

\section*{A tetrahedron gives two curves}
Take the four vertices
\[
\begin{split}
 v_0&=(2\sqrt3,\sqrt6,\sqrt3),\quad v_1=(2\sqrt3,-\sqrt6,-\sqrt3),\\
 v_2&=(-2\sqrt3,\sqrt6,-\sqrt3),\quad v_3=(-2\sqrt3,-\sqrt6,\sqrt3).
\end{split}
\]
Their centroid is zero. When we add the matrices $v_iv_i^T$, the off-diagonal entries cancel. From \eqref{eq:B},
\[
 B=\frac1{12}\sum_{i=0}^{3}v_iv_i^T
 =\operatorname{diag}(4,2,1),\qquad
 E:\ \frac{x^2}{4}+\frac{y^2}{2}+z^2\leq1.
\]
For $k=3$, \eqref{eq:focal} gives the focal ellipse
\begin{equation}\label{eq:ell}
 z=0,\qquad \frac{x^2}{3}+y^2=1.
\end{equation}
For $k=2$, it gives the focal hyperbola
\begin{equation}\label{eq:hyp}
 y=0,\qquad \frac{x^2}{2}-z^2=1.
\end{equation}
So from the four vertices we obtain both focal curves. They are shown in Figure~\ref{fig:tetra}.

\begin{figure}[htbp]
\centering
\begin{tikzpicture}[scale=1.03,
  x={(1cm,0cm)},y={(.42cm,.27cm)},z={(0cm,.86cm)}]
\draw[green!45!black,very thick,samples=140,smooth,variable=\t,
  domain=0:360] plot ({sqrt(3)*cos(\t)},{sin(\t)},0);
\draw[orange!75!black,very thick,samples=90,smooth,variable=\t,
  domain=-1.1:1.1] plot ({sqrt(2)*cosh(\t)},0,{sinh(\t)});
\draw[orange!75!black,very thick,samples=90,smooth,variable=\t,
  domain=-1.1:1.1] plot ({-sqrt(2)*cosh(\t)},0,{sinh(\t)});
\end{tikzpicture}
\caption{The tetrahedron's focal ellipse (green) in $z=0$ and focal hyperbola (orange) in $y=0$.}\label{fig:tetra}
\end{figure}

\section*{Back to the triangle}
Consider the vertices $(4,0)$, $(-2,\sqrt3)$, $(-2,-\sqrt3)$. Their centroid is zero, and \eqref{eq:B} gives $B=\operatorname{diag}(4,1)$. The Steiner inellipse is therefore $x^2/4+y^2\leq1$. The only focal set in \eqref{eq:focal} is
\[
 y=0,\qquad \frac{x^2}{4-1}=1,
\]
namely $(\pm\sqrt3,0)$; see Figure~\ref{fig:triangle}. If we now view the vertices as complex numbers, their polynomial is
\[
 p(z)=(z-4)(z+2-i\sqrt3)(z+2+i\sqrt3)=z^3-9z-28,
\]
and $p'(z)=3(z^2-3)$ has zeros $\pm\sqrt3$. We can also check the Jacobian directly. Its two diagonal entries agree when $3-x^2+y^2=0$, and its off-diagonal entry vanishes when $xy=0$. Together these give $y=0$ and $x=\pm\sqrt3$.

\section*{Why the plane has a simpler formula}
For an arbitrary centered triangle, write the vertices as complex numbers $z_0,z_1,z_2$, with $z_0+z_1+z_2=0$, and put $p(z)=\prod_{i=0}^2(z-z_i)$. Since
\[
 z_0z_1+z_0z_2+z_1z_2=-\frac12\sum_{i=0}^2z_i^2,
\]
we have
\begin{equation}\label{eq:pprime}
 \frac{p'(z)}3=z^2-\frac16\sum_{i=0}^2z_i^2.
\end{equation}
Now write $z_i=s_i+it_i$. Formula~\eqref{eq:B}, with $n=2$, gives
\begin{equation}\label{eq:identity}
 \frac16\sum_{i=0}^2z_i^2=(B_{11}-B_{22})+2iB_{12}.
\end{equation}
Why put the entries of $B$ together in this way? A real symmetric $2\times2$ matrix $A$ has a repeated eigenvalue exactly when $A_{11}=A_{22}$ and $A_{12}=0$. In complex notation these two equations become one:
\[
 (A_{11}-A_{22})+2iA_{12}=0.
\]
Use this with $A=DF(x,y)$ and $z=x+iy$. By \eqref{eq:J}, the term $-\tfrac12(x^2+y^2)I$ disappears when we subtract the diagonal entries, and
\begin{align*}
 (DF_{11}-DF_{22})+2iDF_{12}
 &= (B_{11}-B_{22})+2iB_{12}-(x^2-y^2+2ixy)\\
 &=\frac16\sum_{i=0}^2z_i^2-z^2
 =-\frac{p'(z)}3.
\end{align*}
So, for every triangle, $p'(z)=0$ exactly when $DF$ has a repeated eigenvalue. If the triangle is not centered at the origin, we first subtract its centroid and then add it back to the two resulting points.

The plane is special: we can identify $\R^2$ with the field $\mathbb C$. We can multiply the factors $(z-z_i)$ to make a scalar polynomial and then take its ordinary derivative. Also $z^2=(x^2-y^2)+2ixy$ puts the two real equations for a repeated eigenvalue into one equation. Under a rotation by $\theta$, the complex number in \eqref{eq:identity} picks up a factor $e^{2i\theta}$; this gives the direction of the line through the foci. There is no comparable complex field structure on $\R^3$ or $\R^4$ that would give such a vertex polynomial. The real matrix $B$ and the Jacobian work in all dimensions. The complex derivative is their simpler planar form.

\end{document}